\newtheorem{theorem}{Theorem}[section]
\newtheorem{lemma}{Lemma}[section]
\newtheorem{remark}{Remark}[section]
\newtheorem{definition}{Definition}[section]
\newtheorem{corollary}{Corollary}[section]
\newtheorem{example}{Example}[section]
\newtheorem{proposition}{Proposition}[section]
\numberwithin{equation}{section}
\begin{document}
	
\title{A convex treatment of numerical radius inequalities}
\author{Zahra Heydarbeygi, Mohammad Sababheh and Hamid Reza Moradi}
\subjclass[2010]{Primary 47A12, 47A30. Secondary 15A60.}
\keywords{Numerical radius, operator norm, mixed Schwarz inequality.}

\begin{abstract}
In this article, we prove an inner product inequality for Hilbert space operators. This inequality will be utilized to present a general numerical radius inequality using convex functions. Applications of the new results include obtaining new forms that generalize and extend some well known results in the literature, with an application to the newly defined generalized numerical radius.

We emphasize that the approach followed in this article is different from the approaches used in the literature to obtain such versions. 
\end{abstract}
\maketitle
\pagestyle{myheadings}
\markboth{\centerline {Z. Heydarbeygi, M. Sababheh \& H. R. Moradi}}
{\centerline {A convex treatment of numerical radius inequalities}}
\bigskip
\bigskip
\section{Introduction}
Let $\mathbb{B}\left( \mathscr{H} \right)$ denote the ${{C}^{*}}$-algebra of all bounded linear operators on a complex Hilbert space $\mathscr{H}$ with inner product $\left\langle \cdot,\cdot \right\rangle $. For $T\in \mathbb{B}\left( \mathscr{H} \right)$, let $\omega \left( T \right)$ and $\left\| T \right\|$ denote the numerical radius and the operator norm of $T$, respectively. Recall that $\omega \left( T \right)=\underset{\left\| x \right\|=1}{\mathop{\underset{x\in \mathscr{H}}{\mathop{\sup }}\,}}\,\left| \left\langle Tx,x \right\rangle  \right|$ and $\left\| T \right\|=\underset{\left\| x \right\|=1}{\mathop{\underset{x\in \mathscr{H}}{\mathop{\sup }}\,}}\,\left\| Tx \right\|$. It is evident	 that $\omega \left( \cdot \right)$ defines a norm
on $\mathbb{B}\left( \mathscr{H} \right)$, which is equivalent to the operator norm $\left\| \cdot \right\|$. In fact, for every $T\in \mathbb{B}\left( \mathscr{H} \right)$,
\begin{equation}\label{38}
\frac{1}{2}\left\| T \right\|\le \omega \left( T \right)\le \left\| T \right\|.
\end{equation}
The inequalities in \eqref{38} are sharp. The first inequality becomes an equality if ${{T}^{2}}=0$, while the second inequality becomes an equality if $T$ is normal, i.e., ${{T}^{*}}T=T{{T}^{*}}$, where $T^*$ is the adjoint operator of $T$.

In \cite{1}, Kittaneh improved the second inequality in \eqref{38} as follows
\begin{equation}\label{37}
\omega \left( T \right)\le \frac{1}{2}\left( \left\| T \right\|+{{\left\| {{T}^{2}} \right\|}^{\frac{1}{2}}} \right).
\end{equation}
The fact that \eqref{37} provides a refinement of the second inequality in \eqref{38} follows  from the fact $\|T^2\|\leq \|T\|^2.$\\
Another refinement of the second inequality in \eqref{38} was shown in \cite{8} as follows 
\begin{equation}\label{36}
{{\omega }^{2}}\left( T \right)\le \frac{1}{2}\left\| {{\left| T \right|}^{2}}+{{\left| {{T}^{*}} \right|}^{2}} \right\|, T\in \mathbb{B}\left( \mathscr{H} \right).
\end{equation}
Here $\left| T \right|$ stands for the positive operator
${{\left( {{T}^{*}}T \right)}^{\frac{1}{2}}}$.

A generalization of the inequality \eqref{36} was given in \cite{9} as follows 
\begin{equation}\label{41}
{{\omega }^{2r}}\left( T \right)\le \frac{1}{2}\left\| {{\left| T \right|}^{2r}}+{{\left| {{T}^{*}} \right|}^{2r}} \right\|, T\in \mathbb{B}\left( \mathscr{H} \right), r\geq 1.
\end{equation}
Nowadays, a considerable attention is dedicated to refinements and generalizations of the above inequalities \cite{AF, 12, 2,9, 1, 7, 8,MSS, 11, 10,zamani}.

Recent progress in this field includes sharper refinements, new refined forms and new definitions related to the numerical radius, such as the generalized numerical radius \cite{AF} and the $A-$numerical radius \cite{zamani}. We also refer the reader to the very recent papers \cite{bhu,bhu2,zamani2} for various results including better lower bounds for the numerical radius, new inequalities for the generalized numerical radius and the Davis–Wielandt radius.

Our main target in this article is to present a general form that leads to new refinements and to some already known results in the literature about the numerical radius. Our approach is based on delicate treatments of inner product inequalities via convex functions.

The main result in this paper reads as follows
\begin{equation}\label{11thm_intro}
f\left(|\left<Ax,x\right>\left<Bx,x\right>|^2\right)\leq \frac{f\left(|\left<BAx,x\right>|^2\right)+\left<\left(\alpha f\left(|A|^{\frac{2}{\alpha}}\right)+(1-\alpha)f\left(|B^*|^{\frac{2}{1-\alpha}}\right)\right)x,x\right>}{2},
\end{equation}
for $0\le \alpha \le 1$, where $A,B\in \mathbb{B}\left(\mathscr{H} \right),x \in \mathscr{H}$ is a unit vector and $f:[0,\infty)\to\mathbb{R}$ is an increasing convex function. Then upon selecting certain functions, we obtain new explicit inequalities for the numerical radius. For example, if  $r\ge 1$, the above inequality leads to the numerical radius inequality
\begin{equation}\label{39}
{{\omega }^{2r}}\left( {{B}^{*}}A \right)\le \frac{1}{2}{{\omega }^{r}}\left( {{\left| B \right|}^{2}}{{\left| A \right|}^{2}} \right)+\frac{1}{4}\left\| {{\left| A \right|}^{4r}}+{{\left| B \right|}^{4r}} \right\|,\;A,B\in \mathbb{B}\left(\mathscr{H} \right).
\end{equation}
Then we will show how this refines some results in the literature. Several applications will be presented also.

The importance of the current work lies in the fact that \eqref{11thm_intro} can be used to retrieve several interpolated inequalities for the numerical radius. These interpolated inequalities then can be used to obtain explicit forms of such inequalities.

\section{Preliminary lemmas} 
In this short section, we present some lemmas that we shall need in our analysis. The first lemma is a simple consequence of the classical Jensen and Young inequalities.
\begin{lemma}\label{22}
For $a,b\ge0$, $0\le \alpha \le 1$, and $r\ge 1$,
\[{{a}^{\alpha }}{{b}^{1-\alpha }}\le \alpha a+\left( 1-\alpha  \right)b\le {{\left( \alpha {{a}^{r}}+\left( 1-\alpha  \right){{b}^{r}} \right)}^{\frac{1}{r}}}.\]
\end{lemma}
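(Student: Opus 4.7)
The statement is a chain of two classical inequalities, so my plan is to split it at the middle term $\alpha a+(1-\alpha)b$ and handle each half separately.

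For the left inequality $a^{\alpha}b^{1-\alpha}\le \alpha a+(1-\alpha)b$, I would invoke the concavity of the logarithm (equivalently, the convexity of the exponential). Assuming first that $a,b>0$, Jensen's inequality for $\log$ gives
\[\log\bigl(\alpha a+(1-\alpha)b\bigr)\ge \alpha\log a+(1-\alpha)\log b=\log\bigl(a^{\alpha}b^{1-\alpha}\bigr),\]
and exponentiating yields the desired bound. The boundary cases $a=0$ or $b=0$ (or $\alpha\in\{0,1\}$) are trivial since the left side vanishes or reduces to one of $a,b$.

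For the right inequality $\alpha a+(1-\alpha)b\le\bigl(\alpha a^{r}+(1-\alpha)b^{r}\bigr)^{1/r}$, I would apply Jensen's inequality to the function $t\mapsto t^{r}$, which is convex on $[0,\infty)$ because $r\ge 1$. This directly gives
\[\bigl(\alpha a+(1-\alpha)b\bigr)^{r}\le \alpha a^{r}+(1-\alpha)b^{r},\]
and since both sides are non-negative, raising to the power $1/r$ preserves the inequality and finishes the proof.

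There is no real obstacle here; the lemma is just a convenient packaging of weighted AM--GM (Young's inequality) together with the power-mean inequality, as the statement of the lemma already advertises. The only minor care needed is to note that $a^{\alpha}b^{1-\alpha}$ is interpreted as $0$ whenever $a=0$ or $b=0$ (with the usual convention $0^{0}=1$ if $\alpha\in\{0,1\}$), so that the argument via $\log$ extends continuously to the boundary.
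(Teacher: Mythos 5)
Your proof is correct and follows exactly the route the paper intends: the paper gives no proof of this lemma, stating only that it is ``a simple consequence of the classical Jensen and Young inequalities,'' which is precisely your decomposition into weighted AM--GM (via concavity of $\log$) for the left inequality and Jensen for the convex map $t\mapsto t^{r}$ for the right one. Your extra care with the boundary cases $a=0$, $b=0$, $\alpha\in\{0,1\}$ is a harmless refinement the paper does not bother with.
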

The second lemma follows from the spectral theorem for positive operators
and Jensen's inequality (see e.g., \cite[Theorem 1.4]{4}).
\begin{lemma}\label{23}
Let $T\in \mathbb{B}\left(\mathscr{H} \right)$ be a self adjoint operator and let $x \in \mathscr{H}$ be a unit vector. If $f$ is a convex function on an interval containing the spectrum of $T$, then
\begin{equation}\label{15}
f\left(\left\langle Tx,x \right\rangle \right)\le \left\langle f(T)x,x \right\rangle. 
\end{equation}
If $f$ is concave, then \eqref{15} holds in the reverse direction. 
\end{lemma}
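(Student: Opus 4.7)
The plan is to invoke the spectral theorem for the bounded self-adjoint operator $T$ in order to translate the operator inequality into a classical Jensen inequality for a scalar probability measure. Concretely, I would write $T=\int_{\sigma(T)}\lambda\,dE(\lambda)$, where $E$ denotes the projection-valued spectral measure of $T$ and $\sigma(T)$ is its (compact, real) spectrum contained in the interval on which $f$ is convex.

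Next, to the unit vector $x$ I would attach the scalar Borel measure $\mu_x$ on $\sigma(T)$ defined by $\mu_x(\Omega)=\langle E(\Omega)x,x\rangle$. Since $E(\sigma(T))=I$ and $\|x\|=1$, the measure $\mu_x$ is a Borel probability measure. Via the continuous functional calculus one then has the two identities $\langle Tx,x\rangle=\int_{\sigma(T)}\lambda\,d\mu_x(\lambda)$ and $\langle f(T)x,x\rangle=\int_{\sigma(T)}f(\lambda)\,d\mu_x(\lambda)$, valid for every continuous function $f$ on an interval containing $\sigma(T)$.

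With these identifications in hand, inequality \eqref{15} reduces to the classical integral form of Jensen's inequality: for any Borel probability measure $\mu_x$ and any convex function $f$ defined on a set containing the support of $\mu_x$, one has $f\!\left(\int \lambda\, d\mu_x(\lambda)\right)\le \int f(\lambda)\, d\mu_x(\lambda)$. The concave case follows immediately by applying the convex case to $-f$ and reversing the sign.

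There is no serious obstacle: the argument is essentially a bookkeeping exercise that packages the spectral theorem so that scalar Jensen applies. The only items worth verifying are that $\mathrm{supp}(\mu_x)\subseteq\sigma(T)$ and that $\sigma(T)$ lies inside the interval on which $f$ is convex, so Jensen's inequality is legitimately applicable; both are built into the hypotheses. Continuity, and hence integrability, of $f$ on the compact set $\sigma(T)$ is automatic from convexity on an interval strictly containing $\sigma(T)$, and a standard limiting argument handles the boundary case where $\sigma(T)$ meets an endpoint of the interval.
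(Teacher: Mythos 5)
Your argument is correct and is exactly the route the paper indicates: the paper does not prove Lemma \ref{23} but states that it ``follows from the spectral theorem for positive operators and Jensen's inequality'' (citing \cite[Theorem 1.4]{4}), and your proof simply fills in those details by passing to the scalar probability measure $\mu_x(\Omega)=\langle E(\Omega)x,x\rangle$ and applying integral Jensen. Your closing remarks on measurability at the endpoints of the interval are a harmless refinement that is not even needed in the paper's applications, where $T$ is positive and $f$ is convex and finite on all of $[0,\infty)$.
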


The third lemma is known as the mixed Schwarz inequality (see, e.g., \cite[pp. 75--76]{6}).
\begin{lemma}\label{16}
Let $T\in \mathbb{B}\left(\mathscr{H} \right)$ and let $x \in \mathscr{H}$ be a unit vector. Then,
\[{{\left| \left\langle Tx,x \right\rangle  \right|}^{2}}\le \left\langle \left| T \right|x,x \right\rangle \left\langle \left| {{T}^{*}} \right|x,x \right\rangle.\]
\end{lemma}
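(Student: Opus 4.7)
The plan is to prove the mixed Schwarz inequality via the polar decomposition together with the standard Cauchy--Schwarz inequality in $\mathscr{H}$. Write $T=U|T|$, where $U$ is the partial isometry with initial space $\overline{\mathrm{range}\,|T|}$ and final space $\overline{\mathrm{range}\,T}$, so that $U^{*}U$ is the orthogonal projection onto $\overline{\mathrm{range}\,|T|}$ and in particular $U^{*}U\,|T|^{1/2}=|T|^{1/2}$. This identity is what lets us freely insert or remove $U^{*}U$ next to powers of $|T|$, and it will be the technical backbone of the whole argument.

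Next I would factor the inner product symmetrically. Using $|T|=|T|^{1/2}\,|T|^{1/2}$ and moving the adjoint of $U$ across the inner product, we have
\[
\langle Tx,x\rangle=\langle U|T|^{1/2}\,|T|^{1/2}x,\,x\rangle=\langle |T|^{1/2}x,\,|T|^{1/2}U^{*}x\rangle.
\]
Applying the ordinary Cauchy--Schwarz inequality to the right-hand side yields
\[
|\langle Tx,x\rangle|^{2}\le \||T|^{1/2}x\|^{2}\,\||T|^{1/2}U^{*}x\|^{2}=\langle |T|x,x\rangle\,\langle U|T|U^{*}x,x\rangle.
\]
This reduces the problem to identifying the second factor with $\langle |T^{*}|x,x\rangle$.

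The final, and slightly delicate, step is to verify the operator identity $U|T|U^{*}=|T^{*}|$. Using $U^{*}U\,|T|=|T|$ we compute
\[
\bigl(U|T|U^{*}\bigr)^{2}=U|T|\,(U^{*}U)\,|T|U^{*}=U|T|^{2}U^{*}=U|T|^{2}U^{*}=TT^{*}=|T^{*}|^{2}.
\]
Since $U|T|U^{*}$ is a positive operator, taking positive square roots gives $U|T|U^{*}=|T^{*}|$, which substituted above produces exactly the claimed inequality. The main obstacle is really this last identity: it requires being careful about the initial and final projections of the partial isometry $U$, and in the proof I would want to state the relation $U^{*}U\,|T|=|T|$ up front so that the manipulations in the Cauchy--Schwarz step and in the verification of $U|T|U^{*}=|T^{*}|$ both flow cleanly.
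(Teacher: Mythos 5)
Your argument is correct: the factorization $\langle Tx,x\rangle=\langle |T|^{1/2}x,|T|^{1/2}U^{*}x\rangle$, the Cauchy--Schwarz step, and the identity $U|T|U^{*}=|T^{*}|$ (justified via $(U|T|U^{*})^{2}=U|T|^{2}U^{*}=TT^{*}$ and uniqueness of positive square roots) all check out, including the careful use of $U^{*}U\,|T|=|T|$. The paper itself offers no proof of this lemma --- it is quoted as the known mixed Schwarz inequality with a citation to Halmos --- and your polar-decomposition argument is precisely the standard proof found in that reference, so there is nothing to reconcile between the two.
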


The fourth lemma has been shown in \cite[(18)]{7}, and is considered as a refined triangle inequality for positive operators.
\begin{lemma}\label{43}
Let $T\in \mathbb{B}\left(\mathscr{H} \right)$. Then,
\[\left\| {{\left| T \right|}^{2}}+{{\left| {{T}^{*}} \right|}^{2}} \right\|\le \left\| {{T}^{2}} \right\|+{{\left\| T \right\|}^{2}}.\]
\end{lemma}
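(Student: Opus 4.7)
The plan is to reduce the left-hand side to the norm of a $2\times 2$ operator matrix and then apply the triangle inequality. The main identity I would exploit is the well known equality $\|X^*X\|=\|XX^*\|$, applied not to $T$ itself but to the column operator
\[
X=\begin{pmatrix} T \\ T^* \end{pmatrix}:\mathscr{H}\to\mathscr{H}\oplus\mathscr{H}.
\]
A direct multiplication then yields $X^*X=T^*T+TT^*=|T|^2+|T^*|^2$, while
\[
XX^*=\begin{pmatrix} TT^* & T^2 \\ (T^*)^2 & T^*T \end{pmatrix}.
\]
So the first (and essentially only) step is to argue
\[
\bigl\||T|^2+|T^*|^2\bigr\|=\|X^*X\|=\|XX^*\|.
\]

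Next, I would split the block matrix into its diagonal and anti-diagonal parts and apply the triangle inequality for the operator norm on $\mathscr{H}\oplus\mathscr{H}$:
\[
\left\|\begin{pmatrix} TT^* & T^2 \\ (T^*)^2 & T^*T \end{pmatrix}\right\|
\le
\left\|\begin{pmatrix} TT^* & 0 \\ 0 & T^*T \end{pmatrix}\right\|
+
\left\|\begin{pmatrix} 0 & T^2 \\ (T^2)^* & 0 \end{pmatrix}\right\|.
\]
The diagonal block has norm $\max\{\|TT^*\|,\|T^*T\|\}=\|T\|^2$, which is the standard fact that a block-diagonal operator's norm is the maximum of the block norms. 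The anti-diagonal block is self-adjoint and I would use the elementary observation that for any $S\in\mathbb{B}(\mathscr{H})$ the norm of $\begin{pmatrix}0 & S\\ S^* & 0\end{pmatrix}$ equals $\|S\|$; applying this with $S=T^2$ gives $\|T^2\|$. Combining these two estimates yields the desired inequality.

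The only delicate point is justifying the two block-matrix norm identities; beyond that, everything is a one-line computation. Since both identities are standard consequences of the $C^*$-identity on $\mathbb{B}(\mathscr{H}\oplus\mathscr{H})$ (for the anti-diagonal block one can, for instance, square it to get a block-diagonal positive operator $\mathrm{diag}(SS^*,S^*S)$ of norm $\|S\|^2$), I do not expect a serious obstacle. An alternative route, which I would keep in reserve in case the block-matrix manipulation feels too slick, is a direct estimate: for any unit vector $x$ write $\langle(|T|^2+|T^*|^2)x,x\rangle=\|Tx\|^2+\|T^*x\|^2$ and bound the supremum by playing the two terms against $\|T^2\|$ via Cauchy--Schwarz; however, the block-matrix argument is cleaner and yields the bound in its sharpest form.
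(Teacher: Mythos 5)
Your proof is correct; note, however, that the paper does not actually prove this lemma --- it is imported as a known result from Kittaneh \cite{7} (inequality (18) there), so there is no internal argument to compare against, and your write-up is a genuine self-contained derivation. Every step checks out: for $X=\begin{pmatrix}T\\ T^*\end{pmatrix}$ one indeed has $X^*X=T^*T+TT^*=|T|^2+|T^*|^2$ and
\[
XX^*=\begin{pmatrix} TT^* & T^2 \\ (T^*)^2 & T^*T \end{pmatrix},
\]
the $C^*$-identity gives $\|X^*X\|=\|X\|^2=\|XX^*\|$, the block-diagonal part has norm $\max\{\|TT^*\|,\|T^*T\|\}=\|T\|^2$, and the anti-diagonal part $\begin{pmatrix}0 & S\\ S^* & 0\end{pmatrix}$ with $S=T^2$ has norm $\|S\|$ (its square is $\mathrm{diag}(SS^*,S^*S)$ and it is self-adjoint), so the triangle inequality delivers exactly $\|T^2\|+\|T\|^2$. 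For comparison, the cited source reaches the same bound from the more general norm inequality $\|A+B\|\le\max\{\|A\|,\|B\|\}+\bigl\|A^{1/2}B^{1/2}\bigr\|$ for positive operators $A,B$, applied with $A=|T|^2$, $B=|T^*|^2$ together with the identity $\bigl\||T|\,|T^*|\bigr\|=\|T^2\|$; that route yields a whole family of such estimates, whereas your block-operator argument is shorter and requires nothing beyond elementary facts about operators on $\mathscr{H}\oplus\mathscr{H}$.
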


The fifth lemma, which can be found in \cite[Theorem 2.3]{aujla}, gives a norm inequality involving convex function of positive operators.
\begin{lemma}\label{3}
Let $f$ be a non-negative nondecreasing convex function on $\left[ 0,\infty  \right)$ and let $A,B\in \mathbb{B}\left(\mathscr{H} \right)$ be positive operators. Then
\[\left\| f\left( \frac{A+B}{2} \right) \right\|\le \left\| \frac{f\left( A \right)+f\left( B \right)}{2} \right\|.\]
\end{lemma}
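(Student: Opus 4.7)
The plan is to reduce the claimed norm inequality to two applications of Jensen's inequality (one scalar, one operator) by first passing to a scalar identity for the operator norm. The key preliminary observation is that for any positive operator $T\in\mathbb{B}(\mathscr{H})$ and any non-negative non-decreasing continuous function $g$ on $[0,\infty)$, one has $\|g(T)\|=g(\|T\|)$. This follows from the spectral mapping theorem $\sigma(g(T))=g(\sigma(T))$ together with $\|T\|=\sup\sigma(T)$ and $\|g(T)\|=\sup\sigma(g(T))$, both of which hold because $T$ and $g(T)$ are positive. Applying this to $T=\frac{A+B}{2}$ and $g=f$, and then commuting the non-decreasing continuous $f$ with the supremum in $\left\|\frac{A+B}{2}\right\|=\sup_{\|x\|=1}\left\langle\frac{A+B}{2}x,x\right\rangle$, gives
\[
\left\|f\left(\frac{A+B}{2}\right)\right\|=\sup_{\|x\|=1}f\left(\left\langle\frac{A+B}{2}x,x\right\rangle\right).
\]

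Next I would fix a unit vector $x$ and chain two Jensen-type bounds on the right-hand scalar. Midpoint convexity of $f$ on $[0,\infty)$ gives
\[
f\left(\frac{\langle Ax,x\rangle+\langle Bx,x\rangle}{2}\right)\le\frac{f(\langle Ax,x\rangle)+f(\langle Bx,x\rangle)}{2},
\]
while Lemma~\ref{23} applied separately to the self-adjoint operators $A$ and $B$ yields $f(\langle Ax,x\rangle)\le\langle f(A)x,x\rangle$ and $f(\langle Bx,x\rangle)\le\langle f(B)x,x\rangle$. Averaging and chaining these, together with the variational formula $\|S\|=\sup_{\|y\|=1}\langle Sy,y\rangle$ applied to the positive operator $S=\frac{f(A)+f(B)}{2}$ (which is positive since $f\ge 0$ and $A,B\ge 0$), produces
\[
f\left(\left\langle\frac{A+B}{2}x,x\right\rangle\right)\le\left\langle\frac{f(A)+f(B)}{2}x,x\right\rangle\le\left\|\frac{f(A)+f(B)}{2}\right\|.
\]
Taking the supremum over unit $x$ on the left and combining with the identity from the first paragraph closes the argument.

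The main, and really the only, obstacle is the scalar-reduction identity $\|g(T)\|=g(\|T\|)$; once it is in place the rest is a routine two-level Jensen calculation. A minor technicality to verify is that a non-negative non-decreasing convex $f$ on $[0,\infty)$ is automatically continuous (convexity forces continuity on the open half-line, and the non-decreasing hypothesis rules out any jump at the endpoint), which legitimises commuting $f$ through the supremum. If one wished to avoid the spectral identity altogether, the argument could alternatively be recast by invoking the spectral decomposition of $\frac{A+B}{2}$ and applying the scalar inequality pointwise on the spectrum, but the route above is cleaner and uses only tools already recorded in this section.
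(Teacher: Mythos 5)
Your argument is correct, but it cannot be compared line-by-line with anything in the paper: Lemma~\ref{3} is not proved there at all, it is imported verbatim from Aujla and Silva \cite[Theorem 2.3]{aujla}. The cited theorem is actually a stronger statement --- a weak-majorization inequality $f\left(\frac{A+B}{2}\right)\prec_w \frac{f(A)+f(B)}{2}$ established through eigenvalue and singular-value techniques, which yields the inequality for \emph{every} unitarily invariant norm. What you give is an elementary, self-contained proof of the operator-norm case only, and it is sound: the reduction $\left\|f(T)\right\|=f(\|T\|)=\sup_{\|x\|=1}f(\langle Tx,x\rangle)$ is legitimate for a positive $T$ and a non-negative, non-decreasing, continuous $f$ (spectral mapping plus $\|S\|=\sup\sigma(S)$ for $S\ge 0$, and monotone continuity to pass $f$ through the supremum), the two Jensen steps (scalar midpoint convexity, then Lemma~\ref{23} applied to $A$ and $B$) are exactly as stated, and your remark that convexity together with monotonicity forces continuity at the endpoint $0$ correctly disposes of the one technical point. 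Since the paper only ever invokes Lemma~\ref{3} for the operator norm (in the chain showing that \eqref{39} refines \eqref{drag_2}), your weaker but more elementary version fully suffices for its role here; what it does not recover is the general unitarily invariant norm statement of the source, for which the spectral identity $\|g(T)\|=g(\|T\|)$ has no analogue and genuine majorization arguments are needed.
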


\section{Main Results}\label{42}
In this section, we present our main results. However, we present these results in consecutive subsections, where an inner product inequality for Hilbert space operators is shown via convex functions in the first subsection. Then applications of this generalized form are presented in the second and third subsections.
\subsection{ Inner product inequalities}
Our first main result can be stated as follows.
\begin{theorem}\label{14}
Let $A,B\in \mathbb{B}\left(\mathscr{H} \right)$ and let $x \in \mathscr{H}$ be a unit vector. If $f:[0,\infty)\to\mathbb{R}$ is an increasing convex function, then  
\begin{equation}\label{11thm}
f\left(|\left<Ax,x\right>\left<Bx,x\right>|^2\right)\leq \frac{f\left(|\left<BAx,x\right>|^2\right)+\left<\left(\alpha f\left(|A|^{\frac{2}{\alpha}}\right)+(1-\alpha)f\left(|B^*|^{\frac{2}{1-\alpha}}\right)\right)x,x\right>}{2},
\end{equation}
for $0\le \alpha \le 1$. Further,
\begin{equation}\label{29thm}
f\left(|\left<Ax,x\right>\left<Bx,x\right>|\right)\leq \frac{1}{2} f\left(|\left<BAx,x\right>|\right)+\frac{1}{4}\left<(f(|A|^2)+f(|B^*|^2))x,x\right>.
\end{equation}
\end{theorem}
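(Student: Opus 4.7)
My plan is to derive both inequalities from a single master inequality that couples $\langle Ax,x\rangle\langle Bx,x\rangle$ with $\langle BAx,x\rangle$. The natural choice is the Buzano refinement of Cauchy--Schwarz: for any $u,v\in\mathscr{H}$ and any unit vector $x$,
$$|\langle u,x\rangle\,\langle x,v\rangle|\le \tfrac12\bigl(|\langle u,v\rangle|+\|u\|\,\|v\|\bigr).$$
Applied with $u=Ax$ and $v=B^{*}x$, and using $\langle x,B^{*}x\rangle=\langle Bx,x\rangle$ together with $\langle Ax,B^{*}x\rangle=\langle BAx,x\rangle$, this specializes to the key inequality
$$|\langle Ax,x\rangle\,\langle Bx,x\rangle|\le \tfrac12\bigl(|\langle BAx,x\rangle|+\|Ax\|\,\|B^{*}x\|\bigr), \qquad (\ast)$$
which is the engine of the proof.

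For \eqref{29thm} I would apply $f$ to $(\ast)$, use convexity of $f$ to split the right-hand side into $\tfrac12 f(|\langle BAx,x\rangle|)+\tfrac12 f(\|Ax\|\,\|B^{*}x\|)$, bound $\|Ax\|\,\|B^{*}x\|\le \tfrac12(\langle|A|^{2}x,x\rangle+\langle|B^{*}|^{2}x,x\rangle)$ via AM--GM (the $\alpha=1/2$ case of Lemma~\ref{22}), use monotonicity and then convexity of $f$ to split again, and finally push $f$ inside each inner product by Lemma~\ref{23}.

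For \eqref{11thm} I would square $(\ast)$ and use $(a+b)^{2}\le 2(a^{2}+b^{2})$ to obtain
$$|\langle Ax,x\rangle\,\langle Bx,x\rangle|^{2}\le \tfrac12\bigl(|\langle BAx,x\rangle|^{2}+\langle|A|^{2}x,x\rangle\,\langle|B^{*}|^{2}x,x\rangle\bigr).$$
Applying $f$ and its convexity reduces the task to estimating $f$ of the product $\langle|A|^{2}x,x\rangle\,\langle|B^{*}|^{2}x,x\rangle$. Here I would invoke the generalized Young inequality (Lemma~\ref{22} with conjugate exponents $1/\alpha$ and $1/(1-\alpha)$) to bound the product by $\alpha\langle|A|^{2}x,x\rangle^{1/\alpha}+(1-\alpha)\langle|B^{*}|^{2}x,x\rangle^{1/(1-\alpha)}$, split by convexity of $f$, and then apply Lemma~\ref{23} twice per term: first to the convex function $t\mapsto t^{1/\alpha}$ (since $1/\alpha\ge 1$) to convert $\langle|A|^{2}x,x\rangle^{1/\alpha}$ into $\langle|A|^{2/\alpha}x,x\rangle$, and then to $f$ itself to pull it inside the inner product.

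The main obstacle is that the master inequality $(\ast)$ is not in the preliminaries, so Buzano must either be cited as a classical fact or supplied by a short in-place argument (for instance by expanding $\|u-\langle u,x\rangle x\|^{2}\ge 0$ suitably and applying Cauchy--Schwarz to the resulting orthogonal residuals). A minor point is the endpoints $\alpha\in\{0,1\}$, where $|A|^{2/\alpha}$ or $|B^{*}|^{2/(1-\alpha)}$ is formally undefined but is multiplied by a vanishing coefficient; these cases are handled by continuity, or by interpreting $(\ast)$ at $\alpha\in\{0,1\}$ as the one-sided Cauchy--Schwarz bound already produced by squaring $(\ast)$.
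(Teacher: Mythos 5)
Your proposal is correct and follows essentially the same route as the paper: both arguments rest on Buzano's inequality $(\ast)$ applied with $a=Ax$, $b=B^{*}x$, $e=x$ (which the paper, as you anticipated, derives in place from the Cauchy--Schwarz refinement of Buzano), followed by squaring and $(a+b)^2\le 2(a^2+b^2)$ for \eqref{11thm}, the AM--GM bound for \eqref{29thm}, and convexity of $f$ plus the operator Jensen inequality of Lemma \ref{23} to finish. The only cosmetic difference is the order of Young and Jensen on the product $\left\langle |A|^{2}x,x\right\rangle\left\langle |B^{*}|^{2}x,x\right\rangle$: the paper rewrites $|A|^{2}=\left(|A|^{2/\alpha}\right)^{\alpha}$ and uses the concave direction of Jensen before Young, whereas you apply Young first and then Jensen with the convex function $t\mapsto t^{1/\alpha}$; both yield the same intermediate bound $\alpha\left\langle |A|^{2/\alpha}x,x\right\rangle+(1-\alpha)\left\langle |B^{*}|^{2/(1-\alpha)}x,x\right\rangle$.
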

\begin{proof}
In \cite{5}, the following refinement of the Cauchy-Schwarz  inequality  was shown
\[\left| \left\langle a,b \right\rangle  \right|\le \left| \left\langle a,e \right\rangle \left\langle e,b \right\rangle  \right|+\left| \left\langle a,b \right\rangle -\left\langle a,e \right\rangle \left\langle e,b \right\rangle  \right|\le \left\| a \right\|\left\| b \right\|,\]
where $a$, $b$, $e$ are vectors in $\mathscr{H}$ and $\left\| e \right\|=1$. Since
\[\begin{aligned}
   \left| \left\langle a,e \right\rangle \left\langle e,b \right\rangle  \right|+\left| \left\langle a,b \right\rangle -\left\langle a,e \right\rangle \left\langle e,b \right\rangle  \right|&\ge \left| \left\langle a,e \right\rangle \left\langle e,b \right\rangle  \right|-\left| \left\langle a,b \right\rangle  \right|+\left| \left\langle a,e \right\rangle \left\langle e,b \right\rangle  \right| \\ 
 & =2\left| \left\langle a,e \right\rangle \left\langle e,b \right\rangle  \right|-\left| \left\langle a,b \right\rangle  \right|,  
\end{aligned}\]
we have (see also \cite{3})
\begin{equation}\label{28}
\left| \left\langle a,e \right\rangle \left\langle e,b \right\rangle  \right|\le \frac{1}{2}\left( \left| \left\langle a,b \right\rangle  \right|+\left\| a \right\|\left\| b \right\| \right).
\end{equation}
Putting $e  =x$ with $\left\| x \right\|=1$, $a=Ax$ and $b={{B}^{*}}x$ in the inequality \eqref{28}, we obtain
\begin{eqnarray}\label{30}
\left| \left\langle Ax,x \right\rangle \left\langle Bx,x \right\rangle  \right|\le \frac{1}{2}\left( \left| \left\langle BAx,x \right\rangle  \right|+\left\| Ax \right\|\left\| {{B}^{*}}x \right\| \right).
\end{eqnarray}
Therefore,
\begin{align}
   {{\left| \left\langle Ax,x \right\rangle \left\langle Bx,x \right\rangle  \right|}^{2}}&\le {{\left( \frac{\left| \left\langle BAx,x \right\rangle  \right|+\left\| Ax \right\|\left\| {{B}^{*}}x \right\|}{2} \right)}^{2}} \nonumber\\ 
 & \le \frac{1}{2}\left( {{\left| \left\langle BAx,x \right\rangle  \right|}^{2}}+{{\left\| Ax \right\|}^{2}}{{\left\| {{B}^{*}}x \right\|}^{2}} \right) \label{5}\\ 
 & =\frac{1}{2}\left( {{\left| \left\langle BAx,x \right\rangle  \right|}^{2}}+\left\langle Ax,Ax \right\rangle \left\langle {{B}^{*}}x,{{B}^{*}}x \right\rangle  \right) \nonumber\\ 
 & =\frac{1}{2}\left( {{\left| \left\langle BAx,x \right\rangle  \right|}^{2}}+\left\langle {{\left| A \right|}^{2}}x,x \right\rangle \left\langle {{\left| {{B}^{*}} \right|}^{2}}x,x \right\rangle  \right) \nonumber\\ 
 & =\frac{1}{2}\left( {{\left| \left\langle BAx,x \right\rangle  \right|}^{2}}+\left\langle {{\left( {{\left| A \right|}^{\frac{2}{\alpha }}} \right)}^{\alpha }}x,x \right\rangle \left\langle {{\left( {{\left| {{B}^{*}} \right|}^{\frac{2}{1-\alpha }}} \right)}^{1-\alpha }}x,x \right\rangle  \right) \nonumber\\ 
 & \le \frac{1}{2}\left( {{\left| \left\langle BAx,x \right\rangle  \right|}^{2}}+{{\left\langle {{\left| A \right|}^{\frac{2}{\alpha }}}x,x \right\rangle }^{\alpha }}{{\left\langle {{\left| {{B}^{*}} \right|}^{\frac{2}{1-\alpha }}}x,x \right\rangle }^{1-\alpha }} \right) \label{6}\\ 
 & \le \frac{1}{2}\left( {{\left| \left\langle BAx,x \right\rangle  \right|}^{2}}+\alpha \left\langle {{\left| A \right|}^{\frac{2}{\alpha }}}x,x \right\rangle +\left( 1-\alpha  \right)\left\langle {{\left| {{B}^{*}} \right|}^{\frac{2}{1-\alpha }}}x,x \right\rangle  \right), \label{7}
 \end{align}
 where in \eqref{5} we have used the fact that the function $t\mapsto t^2$ is convex, in \eqref{6} we have used Lemma \ref{23} and in \eqref{7} we have used Lemma \ref{15}. 
 
 Now since $f$ is increasing and convex, \eqref{7} implies
 \begin{align*}
 f\left(|\left<Ax,x\right>\left<Bx,x\right>|^2\right)& \le  f\left(\frac{{{\left| \left\langle BAx,x \right\rangle  \right|}^{2}}+{{\left( \alpha \left\langle {{\left| A \right|}^{\frac{2}{\alpha }}}x,x \right\rangle +\left( 1-\alpha  \right)\left\langle {{\left| {{B}^{*}} \right|}^{\frac{2}{1-\alpha }}}x,x \right\rangle  \right)}}}{2}\right) \\ 
 &\leq\frac{f\left(\left|\left<BAx,x\right>\right|^2\right)+f{{\left( \alpha \left\langle {{\left| A \right|}^{\frac{2}{\alpha }}}x,x \right\rangle +\left( 1-\alpha  \right)\left\langle {{\left| {{B}^{*}} \right|}^{\frac{2}{1-\alpha }}}x,x \right\rangle  \right)}}}{2}\\
&\leq \frac{f\left(\left|\left<BAx,x\right>\right|^2\right)+{{ \alpha f\left(\left\langle {{\left| A \right|}^{\frac{2}{\alpha }}}x,x \right\rangle \right) +\left( 1-\alpha  \right)f\left(\left\langle {{\left| {{B}^{*}} \right|}^{\frac{2}{1-\alpha }}}x,x \right\rangle\right)}}}{2}\\
&\leq \frac{f\left(\left|\left<BAx,x\right>\right|^2\right)+ \alpha \left\langle f\left(\left| A \right|^{\frac{2}{\alpha }}\right)x,x \right\rangle  +\left( 1-\alpha  \right)\left\langle f\left(\left| B^{*} \right|^{\frac{2}{1-\alpha }}\right)x,x \right\rangle}{2}\\
 & \le \frac{f\left(|\left<BAx,x\right>|^2\right)+\left<\left(\alpha f\left(|A|^{\frac{2}{\alpha}}\right)+(1-\alpha)f\left(|B^*|^{\frac{2}{1-\alpha}}\right)\right)x,x\right>}{2},
\end{align*}
where we have used the fact that $f$ is convex and Lemma \ref{23} to obtain the above inequalities. This completes the proof of \eqref{11thm}.

On the other hand, from \eqref{30}, we infer for any unit vector $x \in \mathscr{H}$,
\begin{align*}
\left|\left<Ax,x\right>\left<Bx,x\right>\right|&\leq \frac{\left|\left<BAx,x\right>\right|+\left<Ax,Ax\right>^{1/2}\left<B^*x,B^*x\right>^{1/2}}{2}\\
&=\frac{\left|\left<BAx,x\right>\right|+\left<|A|^2x,x\right>^{1/2}\left<|B^*|^2x,x\right>^{1/2}}{2}\\
&\leq \frac{\left|\left<BAx,x\right>\right|+\frac{\left<|A|^2x,x\right>+\left<|B^*|^2x,x\right>}{2}}{2}.
\end{align*}
Again, since $f$ is increasing and convex, we obtain
\begin{align*}
f\left( \left|\left<Ax,x\right>\left<Bx,x\right>\right|   \right)&\leq f\left( \frac{\left|\left<BAx,x\right>\right|+\frac{\left<|A|^2x,x\right>+\left<|B^*|^2x,x\right>}{2}}{2}    \right)\\
&\leq  \frac{f\left(\left|\left<BAx,x\right>\right|\right)+f\left(\frac{\left<|A|^2x,x\right>+\left<|B^*|^2x,x\right>}{2}\right)}{2}\\
&\leq \frac{f\left(\left|\left<BAx,x\right>\right|\right)+\frac{f\left(\left<|A|^2x,x\right>\right)+f\left(\left<|B^*|^2x,x\right>\right)}{2}}{2}\\
&\leq \frac{f\left(\left|\left<BAx,x\right>\right|\right)+\frac{\left<f\left(|A|^2\right)x,x\right>+\left<f\left(|B^*|^2\right)x,x\right>}{2}}{2}\\
&=\frac{1}{2} f\left(|\left<BAx,x\right>|\right)+\frac{1}{4}\left<(f(|A|^2)+f(|B^*|^2))x,x\right>,
\end{align*}
which proves the  inequality  \eqref{29thm} and completes the proof of the theorem.
\end{proof}
Noting that the function $f(t)=t^{r}, r\geq 1$ satisfies the conditions in Theorem \ref{14}, we obtain the following special case.
\begin{corollary}\label{cor14}
Let $A,B\in \mathbb{B}\left(\mathscr{H} \right)$ and let $x \in \mathscr{H}$ be a unit vector. Then for any $r\ge 1$ and $0\le \alpha \le 1$,
\begin{equation}\label{11}
{{\left| \left\langle Ax,x \right\rangle \left\langle Bx,x \right\rangle  \right|}^{2r}}\le \frac{1}{2}\left( {{\left| \left\langle BAx,x \right\rangle  \right|}^{2r}}+\left\langle \left( \alpha {{\left| A \right|}^{\frac{2r}{\alpha }}}+\left( 1-\alpha  \right){{\left| {{B}^{*}} \right|}^{\frac{2r}{1-\alpha }}} \right)x,x \right\rangle  \right),
\end{equation}
and
\begin{equation}\label{29}
{{\left| \left\langle Ax,x \right\rangle \left\langle Bx,x \right\rangle  \right|}^{r}}\le \frac{1}{2}{{\left| \left\langle BAx,x \right\rangle  \right|}^{r}}+\frac{1}{4}\left\langle {{(\left| A \right|}^{2r}}+{{\left| {{B}^{*}} \right|}^{2r}})x,x \right\rangle.
\end{equation}
\end{corollary}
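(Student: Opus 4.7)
The plan is to specialize Theorem \ref{14} to the single-parameter family $f(t)=t^{r}$ with $r\ge 1$ and read off each of \eqref{11} and \eqref{29} as a one-line substitution. As a preliminary, I would check that $f$ fulfills the hypotheses of Theorem \ref{14} on $[0,\infty)$: since $f'(t)=rt^{r-1}\ge 0$ and $f''(t)=r(r-1)t^{r-2}\ge 0$ for $r\ge 1$, the function is nonnegative, increasing, and convex there, so Theorem \ref{14} applies verbatim. The continuity of $f$ on $[0,\infty)$ also guarantees that the functional-calculus substitutions below are legitimate without any spectral caveat.

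For \eqref{11}, I would insert $f(t)=t^{r}$ into \eqref{11thm}. The scalar entries become $f(|\langle Ax,x\rangle\langle Bx,x\rangle|^{2})=|\langle Ax,x\rangle\langle Bx,x\rangle|^{2r}$ and $f(|\langle BAx,x\rangle|^{2})=|\langle BAx,x\rangle|^{2r}$, while the operator entries, via the continuous functional calculus for the positive operators $|A|^{2/\alpha}$ and $|B^{*}|^{2/(1-\alpha)}$, satisfy $f(|A|^{2/\alpha})=|A|^{2r/\alpha}$ and $f(|B^{*}|^{2/(1-\alpha)})=|B^{*}|^{2r/(1-\alpha)}$. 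Collecting produces \eqref{11} directly. The analogous substitution in \eqref{29thm}, where $f(|A|^{2})=|A|^{2r}$ and $f(|B^{*}|^{2})=|B^{*}|^{2r}$, yields \eqref{29} at once.

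There is essentially no obstacle in the argument: the entire content of the corollary sits inside Theorem \ref{14}, and I only need to recognize that $t\mapsto t^{r}$ satisfies its hypotheses. The only small point of care is at the endpoints $\alpha\in\{0,1\}$, where the exponent $2r/\alpha$ or $2r/(1-\alpha)$ is formally infinite; these cases are handled by the standard convention, already implicit in the Young step of Lemma \ref{22} that drove the proof of Theorem \ref{14}, that the weighted term carrying the vanishing coefficient is simply omitted, leaving the inequality trivially valid.
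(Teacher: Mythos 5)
Your proposal is correct and matches the paper exactly: the paper obtains this corollary by the same one-line observation that $f(t)=t^{r}$, $r\ge 1$, is increasing and convex on $[0,\infty)$ and then substitutes into \eqref{11thm} and \eqref{29thm}. Your added remarks on the functional calculus and the endpoint cases $\alpha\in\{0,1\}$ are sensible extra care the paper does not spell out, but the route is identical.
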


\subsection{Applications to numerical radius inequalities}
The first application of Theorem \ref{14} and Corollary \ref{cor14} is the following numerical radius inequality for the product of two operators.
\begin{corollary}\label{12}
Let $A,B\in \mathbb{B}\left(\mathscr{H} \right)$ and let $f:[0,\infty)\to\mathbb{R}$ be an increasing convex function. Then
\begin{align*}
f\left(\omega^2(B^*A)\right)&\leq \frac{1}{2}f\left(\omega(|B|^2|A|^2)\right)+\frac{1}{4}\left\|f(|A|^4)+f(|B|^4)\right\|.
\end{align*}

In particular, if  $r\ge 1$, then
\begin{equation}\label{39}
{{\omega }^{2r}}\left( {{B}^{*}}A \right)\le \frac{1}{2}{{\omega }^{r}}\left( {{\left| B \right|}^{2}}{{\left| A \right|}^{2}} \right)+\frac{1}{4}\left\| {{\left| A \right|}^{4r}}+{{\left| B \right|}^{4r}} \right\|.
\end{equation}
\end{corollary}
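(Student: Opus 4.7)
The plan is to specialize inequality \eqref{29thm} of Theorem \ref{14} to the positive operators $|A|^2$ and $|B|^2$, and then connect the resulting diagonal product to $|\langle B^*Ax,x\rangle|^2$ via a direct Cauchy--Schwarz step.

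First I would substitute $A\mapsto |A|^2$ and $B\mapsto |B|^2$ in \eqref{29thm}. Since both replacements are positive and self-adjoint, the diagonal expectations $\langle |A|^2 x,x\rangle=\|Ax\|^2$ and $\langle |B|^2 x,x\rangle=\|Bx\|^2$ are nonnegative, the absolute value on the left of \eqref{29thm} disappears, and the functional calculus yields $\bigl||A|^2\bigr|^2=|A|^4$ and $\bigl|(|B|^2)^*\bigr|^2=|B|^4$. The resulting inequality reads
\[
f\bigl(\|Ax\|^2\|Bx\|^2\bigr)\le \tfrac12\, f\bigl(|\langle |B|^2|A|^2 x,x\rangle|\bigr)+\tfrac14\bigl\langle\bigl(f(|A|^4)+f(|B|^4)\bigr)x,x\bigr\rangle.
\]

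Next I would observe that the Cauchy--Schwarz inequality gives $|\langle B^*Ax,x\rangle|^2=|\langle Ax,Bx\rangle|^2\le \|Ax\|^2\|Bx\|^2$. Because $f$ is nondecreasing, inserting this bound promotes the left side to $f(|\langle B^*Ax,x\rangle|^2)$. Taking the supremum over unit vectors $x\in\mathscr{H}$ then yields the desired conclusion: on the left, since an increasing convex function on $[0,\infty)$ is continuous, one has $\sup_x f(|\langle B^*Ax,x\rangle|^2)=f(\omega^2(B^*A))$, and similarly the first term on the right becomes $\tfrac12 f(\omega(|B|^2|A|^2))$. The last term is dominated by $\tfrac14\|f(|A|^4)+f(|B|^4)\|$ since $f(|A|^4)+f(|B|^4)$ is self-adjoint via the functional calculus, for which the operator norm equals the supremum of $\langle\cdot\,x,x\rangle$ over unit vectors. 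The particular case \eqref{39} then follows by specializing to $f(t)=t^r$, $r\ge 1$, which is increasing and convex on $[0,\infty)$.

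Main obstacle: there is no substantive difficulty—the argument is a clean specialization of Theorem \ref{14}. The only point that merits explicit comment is the interchange of the supremum with $f$ and with the passage to the operator norm: continuity of an increasing convex function on $[0,\infty)$, including at $0$ (from the bound $f(t)\le(1-t)f(0)+tf(1)$ combined with monotonicity), handles the former, and self-adjointness of $f(|A|^4)+f(|B|^4)$ handles the latter.
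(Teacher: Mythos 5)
Your proposal is correct and follows essentially the same route as the paper: substitute $|A|^2$ and $|B|^2$ into \eqref{29thm}, bound $|\langle B^*Ax,x\rangle|^2\le\langle|A|^2x,x\rangle\langle|B|^2x,x\rangle$ by Cauchy--Schwarz, use monotonicity of $f$, and take the supremum over unit vectors. Your extra remarks on the continuity of $f$ and the self-adjointness of $f(|A|^4)+f(|B|^4)$ only make explicit points the paper leaves implicit.
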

\begin{proof}
Replacing $A$ and $B$ by ${{\left| A \right|}^{2}}$ and ${{\left| B \right|}^{2}}$ respectively in Theorem \ref{14}, then the inequality \eqref{29thm} reduces to 
\begin{align}
f\left(\left<|A|^2x,x\right>\left<|B|^2x,x\right>\right)&\leq\frac{1}{2}f\left(|\left<|B|^2|A|^2x,x\right>|\right)+\frac{1}{4}\left<\left(f(|A|^4)+f(|B|^4)\right)x,x\right>.\label{needed_1}
\end{align}

On the other hand,
\[\begin{aligned}
   {{\left| \left\langle {{B}^{*}}Ax,x \right\rangle  \right|}^{2}}&={{\left| \left\langle Ax,Bx \right\rangle  \right|}^{2}} \\ 
 & \le {{\left\| Ax \right\|}^{2}}{{\left\| Bx \right\|}^{2}} \quad \text{(by the Cauchy--Schwarz inequality)}\\ 
 & ={{\left\langle {{\left| A \right|}^{2}}x,x \right\rangle }}{{\left\langle {{\left| B \right|}^{2}}x,x \right\rangle }}.  
\end{aligned}\]
Since $f$ is increasing, it follows that $$f\left(  {{\left| \left\langle {{B}^{*}}Ax,x \right\rangle  \right|}^{2}}   \right)\leq f\left( {{\left\langle {{\left| A \right|}^{2}}x,x \right\rangle }}{{\left\langle {{\left| B \right|}^{2}}x,x \right\rangle }}   \right).$$
This together with \eqref{needed_1} imply
$$f\left(  {{\left| \left\langle {{B}^{*}}Ax,x \right\rangle  \right|}^{2}}   \right)\leq \frac{1}{2}f\left(|\left<|B|^2|A|^2x,x\right>|\right)+\frac{1}{4}\left<\left(f(|A|^4)+f(|B|^4)\right)x,x\right>,$$ which implies the first desired inequality upon taking the supremum over all unit vectors $x\in\mathscr{H}.$ The second inequality follows from the first by letting $f(t)=t^r; r\geq 1.$
\end{proof}
In \cite{2}, it was shown that

\begin{equation}\label{drag_2}
\omega^{2r}(B^*A)\leq \frac{1}{2}\|\;|A|^{4r}+|B|^{4r}\|, r\geq 1.
\end{equation}

Noting the following computations
\begin{align*}
\omega^r(|B|^2|A|^2)&\leq \left\|\;|A|^2|B|^2\right\|^r\\
&\leq \frac{1}{2}\left\|\;|A|^4+|B|^4\right\|^r\\
&=\left\|\left(\frac{|A|^4+|B|^4}{2}\right)^r\right\|\\
&\leq \frac{1}{2}\left\|\;|A|^{4r}+|B|^{4r}\right\|,
\end{align*}
we notice that \eqref{39} implies
\begin{align*}
\omega^{2r}(B^*A)&\leq \frac{1}{2}\left\|\;|A|^2|B|^2\right\|^r+\frac{1}{4}\left\|\;|A|^{4r}+|B|^{4r}\right\|\\
&\leq \frac{1}{4}\left\|\;|A|^4+|B|^4\right\|^r+\frac{1}{4}\left\|\;|A|^{4r}+|B|^{4r}\right\|\\
&\leq \frac{1}{4}\left\|\;|A|^{4r}+|B|^{4r}\right\|+\frac{1}{4}\left\|\;|A|^{4r}+|B|^{4r}\right\|\\
&=\frac{1}{2}\left\|\;|A|^{4r}+|B|^{4r}\right\|.
\end{align*}
Consequently, Corollary \ref{12} provides a refinement of \eqref{drag_2}.

In the following, we give a numerical example to show how Corollary \ref{12} provides a refinement of \eqref{drag_2}.
\begin{example}
Let $A=\left[ \begin{matrix}
   0 & 1  \\
   0 & 2  \\
\end{matrix} \right]$ and $B=\left[ \begin{matrix}
   2 & 0  \\
   1 & 0  \\
\end{matrix} \right]$. Then 
\[\begin{aligned}
   {{\omega }^{2}}\left( {{B}^{*}}A \right)=4&<\frac{1}{2}\omega \left( {{\left| B \right|}^{2}}{{\left| A \right|}^{2}} \right)+\frac{1}{4}\left\| {{\left| A \right|}^{4}}+{{\left| B \right|}^{4}} \right\|=\frac{25}{4}.
   \end{aligned}\]
 On the other hand, we have
 $$\frac{1}{2}\left\| {{\left| A \right|}^{4}}+{{\left| B \right|}^{4}} \right\|=\frac{25}{2}.$$
\end{example}

\begin{remark}
Notice that the inequality \eqref{39} is sharp. Indeed if $r=1$ and $A=B$, we get ${{\left\| A \right\|}^{4}}$ 
on both sides of \eqref{39}.
\end{remark}

\begin{remark}
In this remark, we show that Corollary \ref{12} provides a refinement of Dragomir's result \eqref{drag_2}. Notice first that
\begin{align*}
{{\omega }^{r}}\left( {{\left| B \right|}^{2}}{{\left| A \right|}^{2}} \right)&\le \left\|\;|B|^2|A|^2\right\|^r\\
&\leq \left\|\frac{\|A|^4+|B|^4}{2}\right\|^r\\
&=\left\|\left(\frac{\|A|^4+|B|^4}{2}\right)^r\right\|\\
 &\leq\frac{1}{2}\left\| {{\left| A \right|}^{4r}}+{{\left| B \right|}^{4r}} \right\|.
\end{align*}

Consequently,  Corollary \ref{12} implies that
\begin{equation}
\begin{aligned}\label{44}
   {{\omega }^{2r}}\left( {{B}^{*}}A \right)&\le \frac{1}{2}{{\omega }^{r}}\left( {{\left| B \right|}^{2}}{{\left| A \right|}^{2}} \right)+\frac{1}{4}\left\| {{\left| A \right|}^{4r}}+{{\left| B \right|}^{4r}} \right\| \\ 
 & \le \frac{1}{2}\left\| {{\left| A \right|}^{4r}}+{{\left| B \right|}^{4r}} \right\|,
\end{aligned}
\end{equation}
explaining why Corollary \ref{12} provide a refinement of the inequality \eqref{drag_2}. Further, the first inequality in Corollary \ref{12} provides a generalization of \eqref{drag_2}, using increasing convex functions.
\end{remark}

Now Theorem \ref{14} is utilized to obtain the following  numerical radius inequality for one operator.
\begin{corollary}
Let $T\in \mathbb{B}\left(\mathscr{H} \right)$ and let $f:[0,\infty)\to\mathbb{R}$ be an increasing convex function. Then for $0\leq \alpha\leq 1,$
\begin{align*}
f(\omega^4(T))&\leq \frac{1}{2}\left(f(\omega^2(|T|\;|T^*|))+\left\|(1-\alpha)f\left(|T|^{\frac{2}{1-\alpha}}\right)+\alpha f\left(|T^*|^{\frac{2}{\alpha}}\right)\right\|\right),
\end{align*}
and
\begin{align*}
f(\omega^2(T))&\leq\frac{1}{2} f(\omega(|T|\;|T^*|))+\frac{1}{4}\left\|f(|T|^2)+f(|T^*|^2)\right\|.
\end{align*}

 In particular, if $r\ge 1$, then
\begin{equation}\label{21}
{{\omega }^{4r}}\left( T \right)\le \frac{1}{2}\left( {{\omega }^{2r}}\left( \left| T \right|\left| {{T}^{*}} \right| \right)+\left\| \left( 1-\alpha  \right){{\left| T \right|}^{\frac{2r}{1-\alpha }}}+\alpha {{\left| {{T}^{*}} \right|}^{\frac{2r}{\alpha }}} \right\| \right),
\end{equation}
and
\begin{equation}\label{31}
{{\omega }^{2r}}\left( T \right)\le \frac{1}{2}{{\omega }^{r}}\left( \left| T \right|\left| {{T}^{*}} \right| \right)+\frac{1}{4}\left\| {{\left| T \right|}^{2r}}+{{\left| {{T}^{*}} \right|}^{2r}} \right\|.
\end{equation}
Both inequalities \eqref{21} and \eqref{31} are sharp.
\end{corollary}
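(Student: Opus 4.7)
The plan is to specialize Theorem \ref{14} to the positive operators $A = |T^*|$ and $B = |T|$, and then to chain with the mixed Schwarz inequality from Lemma \ref{16} before taking the supremum over unit vectors.

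With this choice, $|A| = A$, $|B^*| = B$, and $BA = |T|\,|T^*|$, so inequality \eqref{11thm} takes the form
\begin{align*}
f\!\left(\langle|T^*|x,x\rangle^2\langle|T|x,x\rangle^2\right) &\le \tfrac{1}{2}\,f\!\left(|\langle |T|\,|T^*|x,x\rangle|^2\right) \\
&\quad + \tfrac{1}{2}\left\langle \left(\alpha f(|T^*|^{2/\alpha}) + (1-\alpha)f(|T|^{2/(1-\alpha)})\right)x,x\right\rangle.
\end{align*}
From Lemma \ref{16}, $|\langle Tx,x\rangle|^2 \le \langle|T|x,x\rangle\langle|T^*|x,x\rangle$; squaring and applying the increasing $f$ yields $f(|\langle Tx,x\rangle|^4) \le f(\langle|T^*|x,x\rangle^2\langle|T|x,x\rangle^2)$, so chaining delivers a pointwise (in $x$) upper bound for $f(|\langle Tx,x\rangle|^4)$.

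The second step is to take the supremum over unit vectors $x \in \mathscr{H}$. Since $f$ is increasing and continuous on $[0,\infty)$, it commutes with the supremum of a nonnegative function, which turns the left-hand side into $f(\omega^4(T))$ and the first right-hand term into $\tfrac{1}{2}f(\omega^2(|T|\,|T^*|))$. Assuming, as we may after a harmless shift, that $f \ge 0$ on the relevant spectra, the operator $\alpha f(|T^*|^{2/\alpha}) + (1-\alpha)f(|T|^{2/(1-\alpha)})$ is positive, so $\sup_x \langle Sx,x\rangle = \|S\|$ applies to it. Combining with the subadditivity of $\sup$ over sums produces the first inequality in the statement. The second inequality is proved identically, starting from \eqref{29thm} rather than \eqref{11thm}.

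The particular forms \eqref{21} and \eqref{31} then drop out by the choice $f(t) = t^r$ with $r \ge 1$, which is nonnegative, increasing, and convex on $[0,\infty)$. Sharpness is verified by testing $T = I$: then $\omega(T) = \omega(|T|\,|T^*|) = 1$ and $|T| = |T^*| = I$, so both sides of \eqref{21} and \eqref{31} collapse to $1$ for every admissible $r$ and $\alpha$. The only mildly delicate ingredient in the argument is the three-step passage to the supremum -- commutation of $f$ with $\sup$, the identification of $\sup_x \langle Sx,x\rangle$ with $\|S\|$ for positive $S$, and subadditivity of $\sup$ over a sum -- but none of these is deep, so the bulk of the proof is a direct chase through Theorem \ref{14} and Lemma \ref{16}.
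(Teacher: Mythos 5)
Your argument is correct and follows essentially the same route as the paper: substitute $A=|T^*|$, $B=|T|$ into \eqref{11thm} (resp.\ \eqref{29thm}), chain with the mixed Schwarz inequality of Lemma \ref{16}, take suprema over unit vectors, and specialize to $f(t)=t^r$; your sharpness check with $T=I$ is a valid (if more special) substitute for the paper's normal $T$ with $r=1$, $\alpha=\tfrac12$. The only cosmetic remark is that your ``harmless shift'' to make $f\ge 0$ is unnecessary, since $\left\langle Sx,x\right\rangle\le\left\| S\right\|$ already holds for every self-adjoint $S$ and unit vector $x$.
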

\begin{proof}
Replacing $A=\left| {{T}^{*}} \right|$ and $B=\left| T \right|$ in the inequality \eqref{11thm}, we get
\begin{align*}
f\left(|\left<|T|x,x\right>\left<|T^*|x,x\right>|^2\right)&\leq\frac{f\left(|\left<|T|\;|T^*|x,x\right>|^2\right)+\left<\left\{(1-\alpha)f\left(|T|^{\frac{2}{1-\alpha}}\right)+\alpha f\left(|T^*|^{\frac{2}{\alpha}}\right)\right\}x,x\right>}{2}.
\end{align*}
Since $f$ is increasing, it follows from Lemma \ref{16} that
\begin{align*}
f\left(|\left<Tx,x\right>|^4\right)&\leq \frac{f\left(|\left<|T|\;|T^*|x,x\right>|^2\right)+\left<\left\{(1-\alpha)f\left(|T|^{\frac{2}{1-\alpha}}\right)+\alpha f\left(|T^*|^{\frac{2}{\alpha}}\right)\right\}x,x\right>}{2}.
\end{align*}
Taking the supremum over unit vectors $x$ implies the first desired inequality. The second inequality follows in a similar way, but using \eqref{29thm}.\\
The other two inequalities follow by by letting $f(t)=t^{r}; r\geq 1.$

To show sharpness of \eqref{21} (resp. \eqref{31}), assume that $T$ is a normal operator. For $r=1$ and $\alpha =\frac{1}{2}$, we get ${{\left\| T \right\|}^{4}}$ (resp. ${{\left\| T \right\|}^{2}}$) on both sides, completing the proof.
\end{proof}

In the following we give a numerical example calculating the terms appearing in \eqref{31}. Also, this example shows how \eqref{31} refines \eqref{36} numerically.

\begin{example}
 Let $T=\left[ \begin{matrix}
   2 & 1  \\
   0 & 1  \\
\end{matrix} \right]$. Then
\[\begin{aligned}
   {{\omega }^{2}}\left( T \right)\approx 4.87132&<\frac{1}{2}\omega \left( \left| T \right|\left| {{T}^{*}} \right| \right)+\frac{1}{4}\left\| {{\left| T \right|}^{2}}+{{\left| {{T}^{*}} \right|}^{2}} \right\|\approx 5.0712.
   \end{aligned}\]
   On the other hand, we have
 $$\frac{1}{2}\left\| {{\left| T \right|}^{2}}+{{\left| {{T}^{*}} \right|}^{2}} \right\|\approx 5.12132.  $$
\end{example}

The following result will be needed for further investigation; yet it is of interest by itself.
\begin{proposition}\label{33}
Let $T\in \mathbb{B}\left(\mathscr{H} \right)$. Then for any $r\ge 1$ and $0\le \alpha \le 1$,
\begin{equation}\label{20}
{{\omega }^{2r}}\left( \left| T \right|\left| {{T}^{*}} \right| \right)\le \left\| \left( 1-\alpha  \right){{\left| T \right|}^{\frac{2r}{1-\alpha }}}+\alpha {{\left| {{T}^{*}} \right|}^{\frac{2r}{\alpha }}} \right\|,
\end{equation}
and
\begin{equation}\label{34}
{{\omega }^{r}}\left( \left| T \right|\left| {{T}^{*}} \right| \right)\le \frac{1}{2}\left\| {{\left| T \right|}^{2r}}+{{\left| {{T}^{*}} \right|}^{2r}} \right\|.
\end{equation}
\end{proposition}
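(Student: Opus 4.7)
The plan is to prove both inequalities directly, without invoking Theorem~\ref{14} or Corollary~\ref{cor14}; those results bound $|\langle Ax,x\rangle\langle Bx,x\rangle|$ whereas here I must bound $|\langle|T||T^*|x,x\rangle|$, which sits on the ``wrong side'' of those statements. The natural starting point is the identity $\langle|T||T^*|x,x\rangle=\langle|T^*|x,|T|x\rangle$ (using self-adjointness of $|T|$), followed by the scalar Cauchy--Schwarz inequality in $\mathscr{H}$:
$$|\langle|T||T^*|x,x\rangle|\le\||T^*|x\|\,\||T|x\|=\langle|T|^2x,x\rangle^{1/2}\langle|T^*|^2x,x\rangle^{1/2}.$$

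To prove \eqref{20}, I would raise this bound to the $2r$-th power and then split the resulting product via the first part of Lemma~\ref{22}, written as $ab=(a^{1/(1-\alpha)})^{1-\alpha}(b^{1/\alpha})^\alpha\le(1-\alpha)a^{1/(1-\alpha)}+\alpha b^{1/\alpha}$. After an $r$-th power and a use of the second part of Lemma~\ref{22} to pull the exponent inside the convex combination, I would apply Lemma~\ref{23} separately to the convex maps $t\mapsto t^{r/(1-\alpha)}$ and $t\mapsto t^{r/\alpha}$ acting on the positive operators $|T|^2$ and $|T^*|^2$. This yields the pointwise estimate
$$|\langle|T||T^*|x,x\rangle|^{2r}\le\bigl\langle\bigl((1-\alpha)|T|^{2r/(1-\alpha)}+\alpha|T^*|^{2r/\alpha}\bigr)x,x\bigr\rangle,$$
and taking the supremum over unit vectors $x$ delivers \eqref{20} (for $0<\alpha<1$; the endpoints $\alpha\in\{0,1\}$ are understood as limits).

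For \eqref{34} I would instead apply the unweighted AM--GM inequality to the Cauchy--Schwarz bound, yielding $|\langle|T||T^*|x,x\rangle|\le\tfrac12(\langle|T|^2x,x\rangle+\langle|T^*|^2x,x\rangle)$, then raise to the $r$-th power and use $((a+b)/2)^r\le(a^r+b^r)/2$ followed by Lemma~\ref{23} applied to the convex function $t\mapsto t^r$ at $|T|^2$ and $|T^*|^2$. Note that \eqref{34} is \emph{not} a specialization of \eqref{20}: choosing $\alpha=\tfrac12$ in \eqref{20} gives $\omega^{2r}\le\tfrac12\||T|^{4r}+|T^*|^{4r}\|$, which only recovers \eqref{34} for $r\ge 2$ after reparameterization, so a separate argument is required. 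The main obstacle is purely bookkeeping: chaining four Young / convexity / Jensen inputs in the correct order so that the exponent arithmetic $1/\alpha$, $1/(1-\alpha)$, $r/\alpha$, $r/(1-\alpha)$ matches the statement exactly.
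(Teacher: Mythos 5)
Your proposal is correct and follows essentially the same route as the paper: both proofs start from $\langle \left| T \right|\left| T^* \right|x,x\rangle=\langle \left| T^* \right|x,\left| T \right|x\rangle$, apply the Cauchy--Schwarz inequality, and then chain Lemma \ref{22} and Lemma \ref{23} to reach the identical pointwise bound $|\langle \left| T \right|\left| T^* \right|x,x\rangle|^{2r}\le\langle((1-\alpha)\left| T \right|^{\frac{2r}{1-\alpha}}+\alpha \left| T^* \right|^{\frac{2r}{\alpha}})x,x\rangle$ before taking the supremum over unit vectors. The only divergence is the order of the intermediate steps (the paper applies operator Jensen to $t\mapsto t^r$ first, then a concave-Jensen step, then Young; you apply scalar Young first, then the power-mean inequality, then convex operator Jensen), and your separate AM--GM treatment of \eqref{34} matches the paper's ``similar argument.''
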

\begin{proof}
Let $x \in \mathscr{H}$ be a unit vector. We have
\begin{align}
   {{\left| \left\langle \left| T \right|\left| {{T}^{*}} \right|x,x \right\rangle  \right|}^{2r}}&={{\left| \left\langle \left| {{T}^{*}} \right|x,\left| T \right|x \right\rangle  \right|}^{2r}} \nonumber\\ 
 & \le {{\left\| \left| T \right|x \right\|}^{2r}}{{\left\| \left| {{T}^{*}} \right|x \right\|}^{2r}} \label{24}\\ 
 & ={{\left\langle \left| T \right|x,\left| T \right|x \right\rangle }^{r}}{{\left\langle \left| {{T}^{*}} \right|x,\left| {{T}^{*}} \right|x \right\rangle }^{r}} \nonumber\\ 
 & ={{\left\langle {{\left| T \right|}^{2}}x,x \right\rangle }^{r}}{{\left\langle {{\left| {{T}^{*}} \right|}^{2}}x,x \right\rangle }^{r}} \nonumber\\ 
  & \le \left\langle {{\left| T \right|}^{2r}}x,x \right\rangle \left\langle {{\left| {{T}^{*}} \right|}^{2r}}x,x \right\rangle  \label{25}\\ 
 & =\left\langle {{\left( {{\left| T \right|}^{\frac{2r}{1-\alpha }}} \right)}^{1-\alpha }}x,x \right\rangle \left\langle {{\left( {{\left| {{T}^{*}} \right|}^{\frac{2r}{\alpha }}} \right)}^{\alpha }}x,x \right\rangle  \nonumber\\ 
 & \le {{\left\langle {{\left| T \right|}^{\frac{2r}{1-\alpha }}}x,x \right\rangle }^{1-\alpha }}{{\left\langle {{\left| {{T}^{*}} \right|}^{\frac{2r}{\alpha }}}x,x \right\rangle }^{\alpha }}\label{26} \\ 
 & \le \left( 1-\alpha  \right)\left\langle {{\left| T \right|}^{\frac{2r}{1-\alpha }}}x,x \right\rangle +\alpha \left\langle {{\left| {{T}^{*}} \right|}^{\frac{2r}{\alpha }}}x,x \right\rangle  \label{27}\\ 
 & =\left\langle \left( \left( 1-\alpha  \right){{\left| T \right|}^{\frac{2r}{1-\alpha }}}+\alpha {{\left| {{T}^{*}} \right|}^{\frac{2r}{\alpha }}} \right)x,x \right\rangle,   \nonumber
\end{align}
where in the inequality \eqref{24} we have used the Cauchy--Schwarz inequality, the inequalities \eqref{25} and \eqref{26} are obtained from Lemma \ref{23}, and the inequality \eqref{27} is a consequence of the first inequality in Lemma \ref{22}.

Whence,
\begin{equation}\label{19}
{{\left| \left\langle \left| T \right|\left| {{T}^{*}} \right|x,x \right\rangle  \right|}^{2r}}\le \left\langle \left( \left( 1-\alpha  \right){{\left| T \right|}^{\frac{2r}{1-\alpha }}}+\alpha {{\left| {{T}^{*}} \right|}^{\frac{2r}{\alpha }}} \right)x,x \right\rangle,
\end{equation}
for any unit vector $x \in \mathscr{H}$. Taking the supremum over $x \in \mathscr{H}$ with $\left\| x \right\|=1$ in the inequality \eqref{19}, we obtain \eqref{20}.\\
Similar argument implies
\begin{equation}\label{46}
{{\left| \left\langle \left| T \right|\left| {{T}^{*}} \right|x,x \right\rangle  \right|}^{r}}\le \frac{1}{2}\left\langle \left( {{\left| T \right|}^{2r}}+{{\left| {{T}^{*}} \right|}^{2r}} \right)x,x \right\rangle,
\end{equation}
for any unit vector $x \in \mathscr{H}$. Taking the supremum over $x\in \mathscr{H}$, $\left\| x \right\|=1$ in \eqref{46} produces the inequality \eqref{34}.
\end{proof}

\begin{remark}
By combining inequalities \eqref{31} and \eqref{34}, we infer that
\begin{equation}\label{35}
\begin{aligned}
   {{\omega }^{2r}}\left( T \right)&\le \frac{1}{2}{{\omega }^{r}}\left( \left| T \right|\left| {{T}^{*}} \right| \right)+\frac{1}{4}\left\| {{\left| T \right|}^{2r}}+{{\left| {{T}^{*}} \right|}^{2r}} \right\| \\ 
 & \le \frac{1}{2}\left\| {{\left| T \right|}^{2r}}+{{\left| {{T}^{*}} \right|}^{2r}} \right\|.  
\end{aligned}
\end{equation}
Consequently, the inequalities \eqref{35} provide a refinement of the inequality \eqref{41}
\end{remark}

The following corollary shows that the  inequality \eqref{31} provides an improvement of the inequality \eqref{37}.
\begin{corollary}
Let $T\in \mathbb{B}\left(\mathscr{H} \right)$. Then \[\omega \left( T \right)\le \frac{1}{2}\sqrt{2\omega \left( \left| T \right|\left| {{T}^{*}} \right| \right)+\left\| {{\left| T \right|}^{2}}+{{\left| {{T}^{*}} \right|}^{2}} \right\|}\le \frac{1}{2}\left( \left\| {{T}^{2}} \right\|^{1/2}+{{\left\| T \right\|}} \right).\]
\end{corollary}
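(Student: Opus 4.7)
The first inequality is an immediate consequence of \eqref{31} specialized to $r=1$: that inequality rearranges to $\omega^{2}(T)\le \tfrac{1}{4}\bigl(2\omega(|T|\,|T^{*}|)+\||T|^{2}+|T^{*}|^{2}\|\bigr)$, so extracting the square root yields the first bound.

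For the second inequality, the plan is to square both sides and reduce the problem to verifying
\[
2\omega(|T|\,|T^{*}|)+\||T|^{2}+|T^{*}|^{2}\| \;\le\; \bigl(\|T^{2}\|^{1/2}+\|T\|\bigr)^{2} \;=\; \|T^{2}\|+2\|T\|\|T^{2}\|^{1/2}+\|T\|^{2}.
\]
Lemma \ref{43} already handles the first and third terms on the right, since it gives $\||T|^{2}+|T^{*}|^{2}\|\le \|T^{2}\|+\|T\|^{2}$. Consequently, the whole task collapses to the single estimate
\[
\omega(|T|\,|T^{*}|)\;\le\;\|T\|\,\|T^{2}\|^{1/2}.
\]

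The main obstacle is this last bound, because $|T|\,|T^{*}|$ is generally non-self-adjoint and no spectral bound is directly available. The route I would take is through the operator norm, using $\omega(|T|\,|T^{*}|)\le \||T|\,|T^{*}|\|$ together with the identity $\||T|\,|T^{*}|\|=\|T^{2}\|$, and finally the elementary step $\|T^{2}\|=\|T^{2}\|^{1/2}\cdot\|T^{2}\|^{1/2}\le \|T\|\,\|T^{2}\|^{1/2}$ (from $\|T^{2}\|^{1/2}\le \|T\|$). To establish the identity, my plan is to apply the $C^{*}$-identity $\|XX^{*}\|=\|X\|^{2}$ twice. First, observing that
\[
(|T|\,|T^{*}|)(|T|\,|T^{*}|)^{*}=|T|\,|T^{*}|^{2}\,|T|=|T|TT^{*}|T|=(|T|T)(|T|T)^{*},
\]
one gets $\||T|\,|T^{*}|\|=\||T|T\|$. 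Then
\[
\||T|T\|^{2}=\|(|T|T)^{*}(|T|T)\|=\|T^{*}|T|^{2}T\|=\|(T^{*})^{2}T^{2}\|=\|(T^{2})^{*}T^{2}\|=\|T^{2}\|^{2},
\]
yielding $\||T|\,|T^{*}|\|=\|T^{2}\|$, which completes the proof.
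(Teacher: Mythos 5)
Your proposal is correct and follows essentially the same route as the paper: both derive the first inequality from \eqref{31} with $r=1$, and both establish the second by combining $\omega(|T|\,|T^{*}|)\le\||T|\,|T^{*}|\|=\|T^{2}\|$, Lemma \ref{43}, and the elementary step $\|T^{2}\|\le\|T\|\,\|T^{2}\|^{1/2}$ to complete the square. The only difference is that you supply a (correct) proof of the identity $\||T|\,|T^{*}|\|=\|T^{2}\|$ via the $C^{*}$-identity, whereas the paper simply asserts it.
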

\begin{proof}
We have
\[\begin{aligned}
   \omega \left( T \right)&\le \frac{1}{2}\sqrt{2\omega \left( \left| T \right|\left| {{T}^{*}} \right| \right)+\left\| {{\left| T \right|}^{2}}+{{\left| {{T}^{*}} \right|}^{2}} \right\|} \quad \text{(by \eqref{31})}\\ 
 & \le \frac{1}{2}\sqrt{2\left\| \left| T \right|\left| {{T}^{*}} \right| \right\|+\left\| {{\left| T \right|}^{2}}+{{\left| {{T}^{*}} \right|}^{2}} \right\|} \quad \text{(by the second inequality in \eqref{38})}\\ 
 & =\frac{1}{2}\sqrt{2\left\| {{T}^{2}} \right\|+\left\| {{\left| T \right|}^{2}}+{{\left| {{T}^{*}} \right|}^{2}} \right\|} \quad \text{(since $\left\| \left| T \right|\left| {{T}^{*}} \right| \right\|=\left\| {{T}^{2}} \right\|$)}\\ 
 & \le \frac{1}{2}\sqrt{2\left\| {{T}^{2}} \right\|+\left\| {{T}^{2}} \right\|+{{\left\| T \right\|}^{2}}} \quad \text{(by Lemma \ref{43})}\\ 
 & \le \frac{1}{2}\sqrt{2\left\| T \right\|{{\left\| {{T}^{2}} \right\|}^{\frac{1}{2}}}+\left\| {{T}^{2}} \right\|+{{\left\| T \right\|}^{2}}} \quad \text{(since $\left\| {{T}^{2}} \right\|={{\left\| {{T}^{2}} \right\|}^{\frac{1}{2}}}{{\left\| {{T}^{2}} \right\|}^{\frac{1}{2}}}\le \left\| T \right\|{{\left\| {{T}^{2}} \right\|}^{\frac{1}{2}}}$)}\\ 
 & =\frac{1}{2}\sqrt{{{\left( \left\| {{T}^{2}} \right\|^{1/2}+{{\left\| T \right\|}} \right)}^{2}}} \\ 
 & =\frac{1}{2}\left( \left\| {{T}^{2}} \right\|^{1/2}+{{\left\| T \right\|}} \right),
\end{aligned}\]
and the proof is complete. 
\end{proof}

\subsection{The generalized numerical radius}\label{sec3}
In this section, we present some new inequalities for the generalized numerical radius $\omega_N(\cdot)$, based on the inner product inequalities obtained earlier. First, we recall the following definition from \cite{AF}.
\begin{definition}
Let $T\in\mathbb{B}(\mathscr{H})$ and let $N$ be any norm on $\mathbb{B}(\mathscr{H})$. Then the generalized numerical radius of $T$, induced by the norm $N$, is defined by $\omega_N(T)=\sup\limits_{\theta\in\mathbb{R}}N(\Re(e^{i\theta}T)),$ where $\Re(T)$ is the real part of the operator $T$.
\end{definition}

In the following result, we use Proposition \ref{33} to obtain a new inequality for $\omega_N(\cdot).$ This result is stated for the algebra of all $n\times n$ matrices, denoted by $\mathcal{M}_n.$ Notice that since the finite rank operators are dense in the class of compact operators in $\mathbb{B}(\mathscr{H})$, it follows that the following result is also true for any compact operator $T\in\mathbb{B}(\mathscr{H}).$
\begin{proposition}\label{prop_w_n_1}
Let $T\in\mathcal{M}_n$ and let $N(\cdot)$ be a given unitarily invariant norm on $\mathcal{M}_n$. Then for any $r\geq 1$ and $0\leq \alpha\leq 1,$
$$\omega_N(|T|\;|T^*|)\leq N\left(\left\{(1-\alpha)|T|^{\frac{2r}{1-\alpha}}+\alpha|T^*|^{\frac{2r}{\alpha}}\right\}^{\frac{1}{2r}}\right),$$ and
$$\omega_{N}\left(|T|\;|T^*|\right)\leq N\left( \left\{\frac{|T|^{2r}+|T^*|^{2r}}{2}\right\}^{1/r} \right).$$
\end{proposition}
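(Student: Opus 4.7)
\medskip
\noindent
\textbf{Proof plan.}
My plan is to split the proof into a reduction step followed by a log--majorization argument at the level of singular values. As a preliminary, I would show that for any $S\in\mathcal{M}_n$ and any unitarily invariant norm $N$,
\[
\omega_N(S)\leq N(S).
\]
Indeed, writing $\Re(e^{i\theta}S)=\tfrac{1}{2}(e^{i\theta}S+e^{-i\theta}S^{*})$, the triangle inequality together with $N(S^{*})=N(S)$ (unitarily invariant norms are insensitive to the adjoint) gives $N(\Re(e^{i\theta}S))\leq N(S)$, and taking the sup over $\theta$ yields the claim. Applied with $S=|T|\,|T^{*}|$, this reduces both inequalities of the proposition to proving the purely norm-theoretic bounds
\[
N(|T|\,|T^{*}|)\leq N\bigl(\{(1-\alpha)|T|^{2r/(1-\alpha)}+\alpha|T^{*}|^{2r/\alpha}\}^{1/(2r)}\bigr)
\]
and $N(|T|\,|T^{*}|)\leq N\bigl(\{\tfrac{|T|^{2r}+|T^{*}|^{2r}}{2}\}^{1/r}\bigr)$.

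For the first residual inequality I would chain two classical log-majorizations. The Araki--Lieb--Thirring log-majorization says that for positive matrices $X,Y$ and $p\geq 1$, $\lambda((Y^{1/2}XY^{1/2})^{p})\prec_{\log}\lambda(Y^{p/2}X^{p}Y^{p/2})$. Applying this with $X=|T|^{2}$, $Y=|T^{*}|^{2}$, $p=r$, and using the cyclic invariance of eigenvalues, one obtains
\[
s_i(|T|\,|T^{*}|)^{2r}=\lambda_i\bigl((|T^{*}||T|^{2}|T^{*}|)^{r}\bigr)\prec_{\log}\lambda_i\bigl(|T|^{2r}|T^{*}|^{2r}\bigr).
\]
The Ando--Hiai log-majorization form of the operator Young inequality states that for positive $X,Y$ and $\nu\in[0,1]$, $\lambda(X^{\nu}Y^{1-\nu})\prec_{\log_{w}}\lambda(\nu X+(1-\nu)Y)$; applied with $X=|T|^{2r/(1-\alpha)}$, $Y=|T^{*}|^{2r/\alpha}$, $\nu=1-\alpha$, this gives
\[
\lambda_i(|T|^{2r}|T^{*}|^{2r})\prec_{\log_{w}}\lambda_i(C),\qquad C=(1-\alpha)|T|^{2r/(1-\alpha)}+\alpha|T^{*}|^{2r/\alpha}.
\]
Chaining and taking the $2r$-th root (log-majorization is preserved under positive powers), then invoking the fact that weak log-majorization implies weak majorization together with Ky~Fan's dominance principle for unitarily invariant norms, delivers $N(|T|\,|T^{*}|)\leq N(C^{1/(2r)})$, which is the desired bound.

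For the second inequality the strategy is analogous. When $r\geq 2$, Araki--Lieb--Thirring with $p=r/2\geq 1$ yields $s_i(|T|\,|T^{*}|)^{r}\prec_{\log}\lambda_i(|T|^{r}|T^{*}|^{r})$, and then Ando--Hiai with $\nu=1/2$ applied to $|T|^{2r}$ and $|T^{*}|^{2r}$ gives $\lambda_i(|T|^{r}|T^{*}|^{r})\prec_{\log_{w}}\lambda_i\bigl(\tfrac{|T|^{2r}+|T^{*}|^{2r}}{2}\bigr)$; extracting the $r$-th root concludes this range. The case $r=1$ is the classical Bhatia--Kittaneh singular-value inequality $s_i(AB)\prec_{w}\lambda_i(\tfrac{A^{2}+B^{2}}{2})$ for positive $A,B$, and the intermediate range $1<r<2$ can be handled by interpolation or by invoking Proposition~\ref{33} together with the ALT inequality in its reversed form for $p\in(\tfrac12,1)$.

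The main obstacle is precisely the passage from the scalar pointwise inequality provided by the proof of Proposition~\ref{33}, namely $|\langle |T|\,|T^{*}|x,x\rangle|^{2r}\leq\langle Cx,x\rangle$, to a genuine unitarily invariant norm bound. One cannot simply take $(2r)$-th roots and appeal to Jensen, because the concavity of $t\mapsto t^{1/(2r)}$ forces the inequality $\langle Cx,x\rangle^{1/(2r)}\geq\langle C^{1/(2r)}x,x\rangle$ in the wrong direction; and the corresponding operator inequality $(|T^{*}||T|^{2}|T^{*}|)^{r}\leq C$ is false in general (as small non-commuting examples show). The deeper matrix-theoretic machinery of ALT and Ando--Hiai is therefore essential, and organizing the log-majorization chain so that its roots land on the precise operator $C^{1/(2r)}$ appearing on the right-hand side is the technical heart of the proof.
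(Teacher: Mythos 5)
Your route is genuinely different from the paper's. The paper never leaves the quadratic-form level: it takes the scalar inequality $|\langle |T|\,|T^*|x,x\rangle|^{2r}\le\langle Cx,x\rangle$ from Proposition \ref{33} (with $C=(1-\alpha)|T|^{2r/(1-\alpha)}+\alpha|T^*|^{2r/\alpha}$), passes to $|\langle\Re(e^{i\theta}|T|\,|T^*|)x,x\rangle|\le\langle Cx,x\rangle^{1/(2r)}$, and then invokes a minimax argument to conclude $s_k(\Re(e^{i\theta}|T|\,|T^*|))\le s_k(C^{1/(2r)})$ before applying unitary invariance and taking the supremum over $\theta$. You instead discard the real parts at the outset via $\omega_N(S)\le N(S)$ and prove the strictly stronger norm inequality $N(|T|\,|T^*|)\le N(C^{1/(2r)})$ by chaining Araki--Lieb--Thirring with the matrix Young inequality. (What you call "Ando--Hiai" is really Ando's singular-value Young inequality $s_j(X^{\nu}Y^{1-\nu})\le\lambda_j(\nu X+(1-\nu)Y)$; combined with Weyl's majorant theorem it does give the weak log-majorization you state.) For the first inequality your chain is sound: $s_i(|T|\,|T^*|)^{2r}=\lambda_i\bigl((|T^*|\,|T|^2|T^*|)^r\bigr)\prec_{\log}\lambda_i(|T|^{2r}|T^*|^{2r})$, then Weyl and Young give weak log-majorization by $\lambda_i(C)$, and $2r$-th roots plus Ky Fan dominance finish. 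You pay with heavier machinery but obtain a stronger conclusion; the paper's argument is more elementary and reuses Proposition \ref{33} directly, and the obstacle you identify (concavity of $t^{1/(2r)}$ pointing the wrong way, failure of the corresponding operator inequality) is precisely the delicate point that the paper's one-line "minimax" step passes over.

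The genuine gap is your second inequality for $1<r<2$. Neither suggested repair works as stated: the reversed Araki--Lieb--Thirring inequality for $p\in(\tfrac12,1)$ reads $\lambda(B^{p/2}A^pB^{p/2})\prec_{\log}\lambda((B^{1/2}AB^{1/2})^p)$, which points in the useless direction here, and "interpolation" in the exponent $r$ between $r=1$ and $r=2$ is not an argument for unitarily invariant norm inequalities. Fortunately the case split is unnecessary: apply ALT with $p=r\ge1$ (rather than $p=r/2$) to get $s_i(|T|\,|T^*|)^{2r}=\lambda_i\bigl((|T^*|\,|T|^2|T^*|)^r\bigr)\prec_{\log}\lambda_i(|T^*|^r|T|^{2r}|T^*|^r)=s_i(|T|^r|T^*|^r)^2$, take square roots, and then use the $\nu=\tfrac12$ Young/Bhatia--Kittaneh step $s_i(|T|^r|T^*|^r)\le\lambda_i\bigl(\tfrac{|T|^{2r}+|T^*|^{2r}}{2}\bigr)$. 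This runs uniformly for all $r\ge1$ and closes the only defect in your proposal.
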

\begin{proof}
From Proposition \ref{33}, we have
$$\left|\left<|T|\;|T^*|x,x\right>\right|^{2r}\leq \left<\left((1-\alpha)|T|^{\frac{2r}{1-\alpha}}+\alpha|T^*|^{\frac{2r}{\alpha}}\right)x,x\right>.$$

Since $|e^{i\theta}|=1,$ this implies
$$\left|\left<e^{i\theta}|T|\;|T^*|x,x\right>\right|\leq \left<\left((1-\alpha)|T|^{\frac{2r}{1-\alpha}}+\alpha|T^*|^{\frac{2r}{\alpha}}\right)x,x\right>^{\frac{1}{2r}}.$$ But since, for any operator $A$, $|\left<\Re(A)x,x\right>|\leq |\left<Ax,x\right>|,$ it follows that
$$\left|\left<\Re\left\{e^{i\theta}|T|\;|T^*|\right\}x,x\right>\right|\leq \left<\left((1-\alpha)|T|^{\frac{2r}{1-\alpha}}+\alpha|T^*|^{\frac{2r}{\alpha}}\right)x,x\right>^{\frac{1}{2r}}.$$ By the minimax principle, it follows that, for $1\leq k\leq n,$
\begin{align*}
 s_k\left(\Re\left\{e^{i\theta}|T|\;|T^*|\right\}\right)&\leq s_k^{\frac{1}{2r}}\left((1-\alpha)|T|^{\frac{2r}{1-\alpha}}+\alpha|T^*|^{\frac{2r}{\alpha}}\right)\\
 &=s_k\left\{ \left((1-\alpha)|T|^{\frac{2r}{1-\alpha}}+\alpha|T^*|^{\frac{2r}{\alpha}}\right)   ^{\frac{1}{2r}}\right\}.
 \end{align*}
 This latter inequality implies that for any unitarily invariant norm $N(\cdot)$ on $\mathcal{M}_n$,
 $$N\left( \Re\left\{e^{i\theta}|T|\;|T^*|\right\}  \right)\leq N\left\{ \left((1-\alpha)|T|^{\frac{2r}{1-\alpha}}+\alpha|T^*|^{\frac{2r}{\alpha}}\right)   ^{\frac{1}{2r}}\right\},$$ which implies 
 $$\omega_N(|T|\;|T^*|)\leq N\left(\left\{(1-\alpha)|T|^{\frac{2r}{1-\alpha}}+\alpha|T^*|^{\frac{2r}{\alpha}}\right\}^{\frac{1}{2r}}\right)$$ upon taking the supremum over $\theta.$ This proves the first desired inequality.

The second inequality can be shown similarly, and hence we leave its proof to the reader.
\end{proof}

\begin{remark}
Notice that when $N$ is the operator norm,
$$N\left( \left\{\frac{|T|^{2r}+|T^*|^{2r}}{2}\right\}^{1/r} \right)=\left\|\left\{\frac{|T|^{2r}+|T^*|^{2r}}{2}\right\}^{1/2r}\right\|=\left\|\frac{|T|^{2r}+|T^*|^{2r}}{2}\right\|^{1/r}.$$
So, when $N(\cdot)=\|\cdot\|,$ Proposition \ref{prop_w_n_1} implies
$$w^{r}(|T|\;|T^*|)\leq \left\|\frac{|T|^{2r}+|T^*|^{2r}}{2}\right\|;$$ which has been shown earlier in Proposition \ref{33}.
\end{remark}

\vskip 0.3 true cm

{\tiny (Z. Heydarbeygi) Department of Mathematics, Payame Noor University (PNU), P.O. Box 19395-4697, Tehran, Iran.}

{\tiny \textit{E-mail address:} zheydarbeygi@yahoo.com}

{\tiny \vskip 0.3 true cm }

{\tiny (M. Sababheh) Department of Basic Sciences, Princess Sumaya University For Technology, Al Jubaiha, Amman 11941, Jordan.}

{\tiny \textit{E-mail address:} sababheh@psut.edu.jo}

{\tiny \vskip 0.3 true cm }

{\tiny (H. R. Moradi) Department of Mathematics, Payame Noor University (PNU), P.O. Box 19395-4697, Tehran, Iran.}

{\tiny \textit{E-mail address:} hrmoradi@mshdiau.ac.ir }

\end{document}